\newtheorem{thm}{Theorem}[section]
\newtheorem{lem}[thm]{Lemma}
\newtheorem{prop}[thm]{Proposition}
\theoremstyle{definition}
\newtheorem{exa}[thm]{Example}
\numberwithin{equation}{section}
\begin{document}


\baselineskip=17pt


\title{On certain generalized isometries of the special orthogonal group}

\author{Marcell Ga\'al\\
Bolyai Institute, University of Szeged\\
H-6720 Szeged, Aradi v\'ertan\'uk tere 1., Hungary\\
and \\
MTA DE "Lend\"ulet" Functional Analysis Research Group\\ 
Institute of Mathematics, University of Debrecen \\
H-4010 Debrecen P.O. Box 12, Hungary \\
E-mail: marcell.gaal.91@gmail.com
}
\date{}

\maketitle


\renewcommand{\thefootnote}{}

\footnote{2010 \emph{Mathematics Subject Classification}: Primary 15B10, 15A60.}

\footnote{\emph{Key words and phrases}: Isometries, Special orthogonal group, Skew-symmetric matrix.}

\renewcommand{\thefootnote}{\arabic{footnote}}
\setcounter{footnote}{0}


\begin{abstract}
In this paper we explore the structure of certain generalized isometries of the special orthogonal group $SO(n)$ which are transformations that leave any member of a large class of generalized distance measures invariant.
\end{abstract}

\section{Introduction}
Let $SO(n)$ and $\mathbb{K}_n(\mathbb{R})$ denote the special orthogonal group and the associated Lie algebra consisting of the set of all skew-symmetric real matrices, respectively. The symbols $\|.\|$ and $\|.\|_F$ stand for the operator norm and the Frobenius norm, respectively. For any $X\in\mathbb{K}_4(\mathbb{R})$ we denote by $\widetilde{X}$ the matrix which is obtained from $X$ by interchanging its (1,4) and (2,3) entries, and interchanging the (4,1) and (3,2) entries, respectively. 
If $M$ is a set and $d:M\times M \to [0,+\infty[$ is a function satisfying $d(x,y)=0$ if and only if $x=y$, then $d$ is termed to be a generalized distance measure. Clearly, a generalized distance measure may not be a metric in the usual sense because we require neither the symmetry nor the triangle inequality.

In \cite{abe} the structure of isometries of $SO(n)$ with respect to the metric induced by any $c$-spectral norm were determined. At the end of that paper the authors proposed an open problem: how one can describe the form of isometries with respect to any unitary invariant norm? The main purpose of this paper is to solve the essential part of the problem.
More precisely, we consider those kinds of generalized distance measures on the manifold $SO(n)$ which are given of the form
\begin{equation} \label{distance}
d_{N,f}(A,B)=N(f(A^{-1}B))
\end{equation}
where $N$ is any unitary invariant norm and $f:\mathbb{T} \to \mathbb{C}$ is a bounded function satisfying
\begin{itemize}
\item[(f1)] $f(z)=0$ if and only if $z=1$;
\item[(f2)] $f$ is conformal in a neighbourhood of $1$;
\item[(f3)] $f$ is continuous on $\mathbb{T}\setminus \{-1\}$;
\item[(f4)] $f(-1)=\lim_{t\uparrow \pi} f(e^{\mathrm{i}t})$;
\item[(f5)] $0\neq \lim_{t\downarrow -\pi} f(e^{\mathrm{i}t})$;
\end{itemize}
and determine the structure of those maps (called generalized isometries) which preserve the above type generalized distance measures between the elements of $SO(n)$. 
Here, for any $A\in SO(n)$ the matrix $f(A)$ is defined via the usual Borel function calculus.  
If we take the logarithm function $f(z)=\log z$, we apply the convention $\log(-1)=\mathrm{i}\pi$.

Observe that our problem is significantly more general than the original one where only the function $f(z)=z-1$ appears. 
Unfortunately, if $n>3$ we are not able to give the complete description of the above type generalized isometries with respect to any unitary invariant norm, but we are with respect to any unitary invariant norm which is not a scalar multiple of the Frobenius norm.
Our results give us a substantial generalization of the beautiful result \cite[Theorem 1]{abe} and also include the characterization of geodesic distance isometries on $SO(3)$. As recent literature on investigations concerning the structures of isometries and certain generalized isometries on matrix classes (or, much more generally, on $C^{*}$-algebras or certain classes of von Neumann algebras) the reader is referred to \cite{uni, special, hatori, spectral, mazur}.

We remark that the conditions (f1)-(f4) concerning the numerical function $f$ came from the requirement that we want to cover the functions $z\mapsto z-1$ and $z\mapsto \log z$ which correspond to the cases of the norm distance and the geodesic distance, respectively (see the examples below).

\begin{exa}
If $f(z)=z-1$, then we have
\begin{equation*}
d_{N,f}(A,B)=N(A-B) 
\end{equation*}
which is just the usual norm distance.
\end{exa}

\begin{exa}
If $f(z)=\log z $
and $N(.)=\|.\|_F$, then we have
\begin{equation*}
d_{N,f}(A,B)=\| \log\left(A^{-1}B\right) \|_F
\end{equation*}
which is the geodesic distance.
\end{exa}

These generalized distance measures are commonly used in robotics, computer vision, computer graphics and in the medical sciences, the reader can consult e.g. \cite{metrics, moakher} for further details.

\section{Results}
Our first result concerning generalized distance measures discussed in the previous section reads as follows.

\begin{thm} \label{T:1}
Let $N(.)$ be a unitary invariant norm which is not a constant multiple of the Froebenius norm and assume that $f\colon \mathbb{T} \to \mathbb{C}$ is a bounded function satisfying (f1)-(f4). The map $\phi \colon SO(n) \to SO(n)$ is a generalized isometry with respect to the generalized distance measure $d_{N,f}(.,.)$, i.e., 
\begin{equation*} 
d_{N,f}(\phi(A),\phi(B))=d_{N,f}(A,B), \qquad A,B\in SO(n)
\end{equation*}
if and only if there exists an orthogonal matrix $Q\in O(n)$ such that $\phi$ is of one of the following forms:
\begin{itemize}
\item[(a)] $\phi(A)=\phi(I)QAQ^{-1}$ for all $A\in SO(n)$;
\item[(b)] $\phi(A)=\phi(I)QA^{-1}Q^{-1}$ for all $A\in SO(n)$;
\item[(c)] $n=4$ and $\phi(A)=\phi(I)Q\exp(\widetilde{X})Q^{-1}$ for all $A \in SO(4)$, where $X\in \mathbb{K}_4(\mathbb{R})$ such that $\exp(X)=A$;
\item[(d)] $n=4$ and  $\phi(A)=\phi(I)Q\exp(-\widetilde{X})Q^{-1}$ for all $A \in SO(4)$, where $X\in \mathbb{K}_4(\mathbb{R})$ such that $\exp(X)=A$.
\end{itemize}
\end{thm}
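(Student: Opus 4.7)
The plan is to reduce to classifying linear isometries of the Lie algebra $(\mathbb{K}_n(\mathbb{R}), N)$ and then globalize via the exponential map. First, since $(XA)^{-1}(XB) = A^{-1}B$, the distance $d_{N,f}$ is left-invariant, so $A\mapsto \phi(I)^{-1}\phi(A)$ is again an isometry, this time fixing $I$. I therefore assume $\phi(I) = I$ throughout. The isometry identity at $A = I$ reads $N(f(\phi(B))) = N(f(B))$, and combined with (f1) shows that $\phi$ is a bijection with $\phi(B) = I$ iff $B = I$.

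The central technical step is to linearize $\phi$ at the identity. Condition (f2) gives $f(z) = f'(1)(z-1) + o(|z-1|)$, which via the holomorphic functional calculus lifts to $f(A) = f'(1)(A-I) + o(\|A-I\|)$ for $A$ close to $I$. Parametrizing a neighbourhood of $I$ by the exponential map therefore yields
\begin{equation*}
d_{N,f}(\exp X, \exp Y) = |f'(1)|\,N(Y-X) + o(\|Y-X\|).
\end{equation*}
A short continuity argument based on (f1), (f3), (f4) shows that $d_{N,f}(A,I) \to 0$ iff $A \to I$, so $\phi$ sends small neighbourhoods of $I$ to small neighbourhoods of $I$. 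Rescaling along the one-parameter subgroups $t \mapsto \exp(tX)$ --- that is, examining the limit $\Phi(X) := \lim_{\lambda\to 0^+} \lambda^{-1}\log\bigl(\phi(\exp(\lambda X))\bigr)$, whose existence follows from compactness together with the isometry constraint --- produces a map $\Phi : \mathbb{K}_n(\mathbb{R}) \to \mathbb{K}_n(\mathbb{R})$ satisfying $N(\Phi(Y) - \Phi(X)) = N(Y-X)$ for all $X, Y$. Since $\Phi(0) = 0$, Mazur--Ulam implies that $\Phi$ is a linear isometry of $(\mathbb{K}_n(\mathbb{R}), N)$.

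At this point I appeal to the structure theorem for linear isometries of $(\mathbb{K}_n(\mathbb{R}), N)$ when $N$ is a unitary invariant norm distinct from any scalar multiple of $\|\cdot\|_F$ (a result used implicitly in \cite{abe}): these are precisely the maps $X \mapsto \pm QXQ^{-1}$ with $Q \in O(n)$, together with --- in the exceptional case $n = 4$ --- additional isometries arising from the accidental decomposition $\mathbb{K}_4(\mathbb{R}) \cong \mathbb{K}_3(\mathbb{R}) \oplus \mathbb{K}_3(\mathbb{R})$, which swap the two summands and correspond to $X \mapsto \widetilde X$. Composing $\phi$ with a suitable transformation of $SO(n)$ (conjugation by $Q$, group inversion, and/or, for $n = 4$, the tilde map), we may therefore assume $d\phi_I = \mathrm{id}$, and the remaining task is to prove that any isometry of $SO(n)$ fixing $I$ with trivial tangent map at $I$ is the identity. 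This is carried out by iterating the shift-and-linearize procedure: for every $A \in SO(n)$, the map $B \mapsto \phi(A)^{-1}\phi(AB)$ is again an isometry fixing $I$, so the same local analysis yields information about $\phi$ in a neighbourhood of $A$, and together with the connectedness of $SO(n)$ this propagates the identity conclusion globally.

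The principal obstacle is precisely this globalization step --- verifying that the branch (one of (a)--(d)) chosen near $I$ extends consistently across all of $SO(n)$. The $n = 4$ case is the most delicate, since the tilde operation must be tracked carefully along any path, and it is also the only $n$ for which that operation is nontrivial. The hypothesis that $N$ is not a scalar multiple of $\|\cdot\|_F$ enters only through the classification of linear isometries: for $N = \|\cdot\|_F$, the skew-symmetric matrices form a Euclidean space whose isometry group is vastly larger than the list above, and this classification fails.
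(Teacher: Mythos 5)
Your overall skeleton (reduce to $\phi(I)=I$, extract a linear isometry of $\mathbb{K}_n(\mathbb{R})$, invoke the Li--Tsing classification including the exceptional tilde isometries for $n=4$, exponentiate) matches the paper's endpoints, but the two steps you yourself flag as delicate are genuine gaps, and they are exactly where the paper does its real work. First, the existence of your differential $\Phi(X)=\lim_{\lambda\to 0^+}\lambda^{-1}\log\phi(\exp(\lambda X))$ is asserted from ``compactness together with the isometry constraint,'' but compactness only gives subsequential limits, and nothing in your argument shows the limit is unique, that $\Phi$ is well defined, or that it is surjective (which Mazur--Ulam requires). Note that $d_{N,f}$ is merely a generalized distance measure --- no symmetry, no triangle inequality, and $f$ need not even be continuous at $-1$ --- so none of the standard rigidity/differentiability results for metric or Riemannian isometries (Myers--Steenrod, geodesic-preservation) is available. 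Second, your globalization step is not an argument at all: you would need to prove that an isometry fixing $I$ with trivial tangent map is the identity, and for a non-symmetric, non-triangle distance there is no geodesic structure to propagate local data along paths; ``iterating shift-and-linearize plus connectedness'' does not supply this. (A smaller error in the same vein: the identity $N(f(\phi(B)))=N(f(B))$ gives injectivity via (f1), not bijectivity; the paper proves surjectivity separately by a compactness argument in Lemma~\ref{L:1}.)

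The paper closes both gaps simultaneously by an algebraic route you do not have: using the Mazur--Ulam-type Proposition~\ref{P:1} from \cite{mazur} --- whose condition (c3) is verified precisely via the conformality-derived inequality $|f(z^2)|\geq K|f(z)|$ near $1$ --- it shows $\phi$ preserves inverted Jordan triple products locally, globalizes this by \cite[Lemma 7]{hatori}, and concludes that $\phi$ is a unital Jordan triple endomorphism. This forces $\phi$ to map each one-parameter subgroup $e^{tX}$ \emph{exactly} onto a one-parameter subgroup $e^{th(X)}$ for all $t\in\mathbb{R}$ (first for rational $t$ by power-compatibility, then by continuity), so the intertwining $\phi(\exp X)=\exp h(X)$ holds globally with no separate differentiability or propagation argument; isometry of $h$ then drops out of a clean first-order expansion in $t$, and linearity from the classical Mazur--Ulam theorem. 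Finally, your proposal omits the sufficiency direction entirely: that the $n=4$ maps (c) and (d) actually \emph{are} isometries is not obvious (the tilde operation is not induced by conjugation), and the paper needs Lemma~\ref{L:0} on the coincidence of the eigenvalues of $BCH(-X,Y)$ and $BCH(-\widetilde{X},\widetilde{Y})$ to verify it.
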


Our second theorem reads as follows.

\begin{thm} \label{T:2}
Let $N(.)$ be a constant multiple of the Froebenius norm and assume that $f\colon \mathbb{T} \to \mathbb{C}$ is a bounded function satisfying (f1)-(f4). The map $\phi \colon SO(3) \to SO(3)$ is a generalized isometry with respect to the generalized distance measure $d_{N,f}(.,.)$
if and only if there exists an orthogonal matrix $Q\in O(3)$ such that $\phi$ is of either of the form (a) or (b) in Theorem~\ref{T:1}.
\end{thm}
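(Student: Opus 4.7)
\emph{Proof plan.} For the easy (if) direction, write $C=A^{-1}B$ and note the following. In case (a), $\phi(A)^{-1}\phi(B)=QCQ^{-1}$, a normal matrix similar to $C$, whose Frobenius $f$-norm agrees with that of $C$. In case (b), $\phi(A)^{-1}\phi(B)=QAB^{-1}Q^{-1}$ is similar to $AB^{-1}$; since $\operatorname{tr}(AB^{-1})=\operatorname{tr}((A^{-1}B)^{-1})=\operatorname{tr}((A^{-1}B)^T)=\operatorname{tr}(A^{-1}B)$, and on $SO(3)$ the trace alone determines the rotation angle $\theta$ and hence the spectrum $\{1,e^{\pm\mathrm{i}\theta}\}$, the Frobenius $f$-norms of $AB^{-1}$ and $A^{-1}B$ coincide.

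For the converse, replace $\phi$ by $A\mapsto\phi(I)^{-1}\phi(A)$ (still a $d_{N,f}$-isometry by left-invariance of $d_{N,f}$) so that we may assume $\phi(I)=I$. Writing $\theta(C)$ for the rotation angle of $C\in SO(3)$ and $N=c\|\cdot\|_F$, we have
\begin{equation*}
d_{N,f}(A,B)=c\,g(\theta(A^{-1}B)),\qquad g(\theta):=\sqrt{|f(e^{\mathrm{i}\theta})|^2+|f(e^{-\mathrm{i}\theta})|^2}.
\end{equation*}
The conformality condition (f2) yields $f(e^{\mathrm{i}\theta})=\mathrm{i}f'(1)\theta+o(\theta)$ near $\theta=0$, so $g$ is strictly increasing on some initial interval $[0,\delta]$, and hence $\theta(\phi(A)^{-1}\phi(B))=\theta(A^{-1}B)$ whenever the latter angle is below $\delta$.

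To upgrade this local angle preservation to a global one, fix $A,B\in SO(3)$ with rotation angle $\theta$ and subdivide the minimizing geodesic $\gamma(t)=A(A^{-1}B)^t$, $t\in[0,1]$, into $n$ equal arcs with $2\theta/n<\min\{\delta,\pi/2\}$. The local step gives angular lengths $\theta/n$ and $2\theta/n$ between the images of consecutive and second-nearest partition points, respectively. Equality in the triangle inequality for the bi-invariant metric on $SO(3)$, valid well within the injectivity radius $\pi/2$, forces each consecutive triple $\phi(\gamma(t_{i-1})),\phi(\gamma(t_i)),\phi(\gamma(t_{i+1}))$ to lie on a common minimizing geodesic; an induction propagates collinearity across the entire partition and yields $\theta(\phi(A)^{-1}\phi(B))=\theta$. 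Thus $\phi$ is an isometry of $SO(3)$ in its bi-invariant Riemannian metric.

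The conclusion now follows from the classical description of the isometries of $SO(3)\cong\mathbb{RP}^3$ fixing the identity, which are precisely the conjugations $A\mapsto QAQ^{-1}$ with $Q\in O(3)$ together with their compositions with the inversion $A\mapsto A^{-1}$; reinstating the left translation by $\phi(I)$ produces (a) and (b). The principal obstacle is the globalization step: since $g$ may well fail to be injective on $[0,\pi]$, the angle of $\phi(A)^{-1}\phi(B)$ cannot in general be recovered directly from the $d_{N,f}$-distance, and one must instead combine the local injectivity of $g$ with the rigidity of equality in the geodesic triangle inequality. It is also here that the hypothesis $n=3$ enters decisively: the spectrum of $A^{-1}B$ is determined by its trace, a feature that fails for $n\ge 4$ and is the source of the additional Frobenius-only isometries (c)--(d) that appear in Theorem~\ref{T:1}.
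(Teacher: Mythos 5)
Your route is genuinely different from the paper's and, modulo two repairable points, it is viable. The paper proves Theorem~\ref{T:2} by running the entire machinery of Theorem~\ref{T:1}: the Mazur--Ulam type Proposition~\ref{P:1} and Lemma~\ref{L:1} (continuity, surjectivity, the Jordan triple endomorphism property), the passage to one-parameter subgroups, and the induced \emph{linear} isometry $h$ of $(\mathbb{K}_3(\mathbb{R}),\|\cdot\|_F)$; it then observes that on $3\times 3$ skew-symmetric matrices (eigenvalues $\{\mathrm{i}\lambda,-\mathrm{i}\lambda,0\}$, singular values $(\lambda,\lambda,0)$) the Frobenius norm is a $c$-spectral norm with $c=(\sqrt{2},0,0)$, so the Li--Tsing classification \cite{chi} still applies and yields only the form (aa). You instead exploit three-dimensionality directly: $d_{N,f}$ is a function $g$ of the rotation angle, local injectivity of $g$ plus rigidity of equality in the triangle inequality globalizes the angle preservation, and the constant-curvature model $SO(3)\cong\mathbb{RP}^3$ classifies the isometries fixing $I$. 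This bypasses both the Mazur--Ulam machinery and the linear preserver literature, and it transparently explains why the exceptional forms (c), (d) cannot occur for $n=3$; the cost is that the argument is strictly $3$-dimensional and imports Riemannian facts the paper never needs (uniqueness of short minimizing geodesics, triangle-equality rigidity, surjectivity of a distance-preserving self-map of a compact space, and Myers--Steenrod or a symmetric-space determination-by-differential argument before the classification of point-fixing isometries of $\mathbb{RP}^3$ can be invoked).

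Two steps need repair. First, ``(f2) yields $f(e^{\mathrm{i}\theta})=\mathrm{i}f'(1)\theta+o(\theta)$, so $g$ is strictly increasing on $[0,\delta]$'' is a non sequitur as written: a first-order expansion does not imply monotonicity on any interval (the function $\theta+\theta^{2}\sin(1/\theta)$ has the same expansion but is monotone on no interval $[0,\delta]$, hence not injective there). You need the stronger reading of (f2) that the paper itself adopts, namely a local power series $f(z)=\sum_{k\geq 1}a_k(z-1)^k$ with $a_1\neq 0$; then $g(\theta)^2=|f(e^{\mathrm{i}\theta})|^2+|f(e^{-\mathrm{i}\theta})|^2$ is real-analytic near $0$ with derivative $4|a_1|^2\theta+O(\theta^2)>0$, which does give strict monotonicity on some $[0,\delta]$. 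Second, your local step tacitly assumes the image angle also lies in $[0,\delta]$: from $g(\theta(\phi(A)^{-1}\phi(B)))=g(\theta(A^{-1}B))$ with $\theta(A^{-1}B)$ small you cannot conclude equality of angles without excluding a large root of $g$. This is where (f1), (f3), (f4) must enter: $g>0$ on $(0,\pi]$ by (f1), $g$ is continuous on $(0,\pi)$ by (f3), and $\liminf_{\theta\uparrow\pi}g(\theta)\geq |f(-1)|>0$ by (f4), so $m:=\inf_{\theta\in[\delta,\pi]}g(\theta)>0$; then choose the subdivision fine enough that $g(2\theta/n)<m$ in addition to $2\theta/n<\min\{\delta,\pi/2\}$. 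With these insertions the geodesic-rigidity globalization and the $\mathbb{RP}^3$ classification go through, and your proof is a legitimate alternative to the paper's.
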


For the proof we need some more preliminaries.
Since $SO(n)$ is a compact connected Lie group the exponential map defined by
\[
\exp: \mathbb{K}_n(\mathbb{R})\to SO(n),\qquad X\mapsto \sum\limits_{k=0}^{\infty} \frac{X^k}{k!}
\]
is surjective. Moreover, due to the Lie group-Lie algebra correspondence we have that if $\gamma(t)$ is a one-parameter subgroup, i.e.,
\begin{equation} \label{subgroup}
\gamma(t+s)=\gamma(t)\cdot\gamma(s),\qquad t,s\in\mathbb{R},
\end{equation}
then there exists an $X\in\mathbb{K}_n(\mathbb{R})$, the generator of $\gamma(t)$, for which $\gamma(t)=\exp(tX)$.

It is apparent that if $X,Y\in\mathbb{K}_n(\mathbb{R})$, then we have a special orthogonal matrix $BCH(X,Y)$ such that $\exp(X)\exp(Y)=\exp\left(BCH(X,Y)\right)$. According to the famous Baker-Campbell-Hausdorff formula we have
\[
\begin{gathered}
BCH(X,Y)=X+Y+\frac{1}{2}\left(XY-YX\right)+\\
\frac{1}{12}\left(X^2Y+XY^2-2XYX+Y^2X+YX^2-2YXY  \right)+...
\end{gathered}
\]
for the first three terms of the series expansion of $BCH(X,Y)$. The following lemma appeared in \cite{abe}.

\begin{lem} \cite[Lemma 8.]{abe} \label{L:0}
For any $X,Y\in\mathbb{K}_4(\mathbb{R})$ the eigenvalues of $BCH(X,Y)$ and $BCH(\widetilde{X},\widetilde{Y})$ coincide.
\end{lem}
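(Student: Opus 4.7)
The plan is to exploit the classical splitting of $\mathbb{K}_4(\mathbb{R})$ into two commuting three-dimensional ideals $\mathfrak{h}_+$ and $\mathfrak{h}_-$ --- the self-dual and anti-self-dual skew-symmetric matrices, i.e., the $\pm 1$ eigenspaces of the Hodge star on $\Lambda^2\mathbb{R}^4$ --- each isomorphic to $\mathfrak{so}(3)$. Writing $X = X_+ + X_-$ and $Y = Y_+ + Y_-$ accordingly, one first verifies by inspection of the four relevant entries that $\widetilde{\,\cdot\,}$ acts as the identity on $\mathfrak{h}_+$ and as a (magnitude-preserving) reflection on $\mathfrak{h}_-$; in particular $\widetilde{X} = X_+ + \widetilde{X}_-$ with $|\widetilde{X}_-|=|X_-|$.

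Since $[\mathfrak{h}_+,\mathfrak{h}_-]=0$, the factors $\exp(X_\pm)$ and $\exp(Y_\mp)$ commute in $SO(4)$, and a short rearrangement of $\exp(X)\exp(Y)$ gives
\[
BCH(X,Y) = BCH(X_+,Y_+) + BCH(X_-,Y_-).
\]
Applied to $\widetilde X,\widetilde Y$ in place of $X,Y$, the same identity yields
\[
BCH(\widetilde X,\widetilde Y) = BCH(X_+,Y_+) + BCH(\widetilde X_-,\widetilde Y_-).
\]
The self-dual parts already agree, so the lemma reduces to showing that the two anti-self-dual matrices $BCH(X_-,Y_-)$ and $BCH(\widetilde X_-,\widetilde Y_-)$ have the same magnitude in the Euclidean norm on $\mathfrak{h}_-\cong\mathfrak{so}(3)$; indeed, the eigenvalues of any $Z=Z_+ + Z_-\in\mathbb{K}_4(\mathbb{R})$ are $\pm\mathrm{i}(|Z_+|+|Z_-|)$ and $\pm\mathrm{i}(|Z_+|-|Z_-|)$, since each $Z_\pm$ contributes $\pm\mathrm{i}|Z_\pm|$ with multiplicity two and the two summands commute.

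To prove the remaining equality of magnitudes, I would write the reflection as $\widetilde{\,\cdot\,}|_{\mathfrak{h}_-}\colon Z_-\mapsto -UZ_-U^{-1}$ for a suitable $U\in SO(4)$ (the rotation $-\widetilde{\,\cdot\,}|_{\mathfrak{h}_-}\in SO(3)$ is inner on $\mathfrak{h}_-$, and its lift to the corresponding $SU(2)$-subgroup of $SO(4)$ furnishes $U$). Then
\[
\exp(\widetilde X_-)\exp(\widetilde Y_-)=U\bigl(\exp(Y_-)\exp(X_-)\bigr)^{-1}U^{-1}.
\]
The matrix $\exp(Y_-)\exp(X_-)$ is conjugate in $SO(4)$ to $\exp(X_-)\exp(Y_-)=\exp(BCH(X_-,Y_-))$ (via $\exp(X_-)$); inversion and orthogonal conjugation both preserve the spectrum of an orthogonal matrix; hence $\exp(BCH(\widetilde X_-,\widetilde Y_-))$ and $\exp(BCH(X_-,Y_-))$ have identical eigenvalues, which forces $|BCH(\widetilde X_-,\widetilde Y_-)| = |BCH(X_-,Y_-)|$ and completes the argument.

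The main obstacle I foresee is the usual branch ambiguity of the Lie algebra logarithm when passing from equal spectra at the group level back to equal magnitudes at the algebra level; this has to be dealt with by fixing a consistent convention for $BCH$ (the one implicit in \cite{abe}) and by using that the Euclidean magnitude on each three-dimensional ideal $\mathfrak{h}_\pm$ is invariant under the $SO(3)$-action --- exactly the property that makes the conjugation argument above translate cleanly from the group to the algebra.
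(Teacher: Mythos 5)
The paper does not actually prove this lemma: it is imported verbatim as \cite[Lemma 8]{abe}, where it is established via the quaternionic model of $SO(4)$ as $(SU(2)\times SU(2))/\{\pm I\}$. Your splitting $\mathbb{K}_4(\mathbb{R})=\mathfrak{h}_+\oplus\mathfrak{h}_-$ into commuting copies of $\mathfrak{so}(3)$ is precisely that model in Lie-algebra language, so your route is in substance the route of the cited source. The individual steps check out: $\widetilde{\,\cdot\,}$ swaps the $(1,4)$ and $(2,3)$ entries, hence fixes the coordinate $c+d$ and negates $c-d$, i.e.\ it is the identity on one ideal and a reflection on the other; $BCH$ splits along the two ideals because every mixed bracket vanishes; the reflection is $Z_-\mapsto -UZ_-U^{-1}$ for some $U\in SO(4)$ since $\mathrm{Ad}\colon SO(4)\to SO(\mathfrak{h}_+)\times SO(\mathfrak{h}_-)$ is onto; and the conjugation/inversion computation at the group level is correct (inversion preserves the spectrum of a real orthogonal matrix). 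One small justification is missing in the eigenvalue formula: commutativity of $Z_+$ and $Z_-$ alone does not force all four sign combinations $\pm\mathrm{i}(|Z_+|\pm|Z_-|)$ to occur; you need the tensor structure $\mathbb{C}^4\cong\mathbb{C}^2\otimes\mathbb{C}^2$, or the elementary observation that for unit-norm $J\in\mathfrak{h}_+$, $K\in\mathfrak{h}_-$ the product $JK$ is a traceless symmetric involution, so both its eigenvalues $\pm1$ appear with multiplicity two.

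The genuine soft spot is your final step, and your proposed remedy (``fix a consistent convention'') is an acknowledgment rather than a proof. For $W\in\mathfrak{h}_-$ the spectrum of $\exp(W)$ determines $|W|$ only through the map $t\mapsto\{e^{\pm\mathrm{i}t}\}$, which is not injective: $|W|=\pi/2$ and $|W|=3\pi/2$ give isospectral exponentials with different magnitudes, so equal group spectra do not yield $|BCH(\widetilde{X}_-,\widetilde{Y}_-)|=|BCH(X_-,Y_-)|$ without further control. The clean repair is to stay in the Lie algebra and never pass through the group: with $BCH$ given by its Lie series one has $BCH(RA,RB)=R\,BCH(A,B)$ for any automorphism $R$ of $\mathfrak{h}_-$ and the formal identity $BCH(-A,-B)=-BCH(B,A)$, so writing your reflection as $\sigma=-R$ with $R=\mathrm{Ad}_U|_{\mathfrak{h}_-}\in SO(3)$ gives
\[
BCH(\widetilde{X}_-,\widetilde{Y}_-)=BCH(-RX_-,-RY_-)=-R\,BCH(Y_-,X_-),
\]
and since $BCH(Y_-,X_-)=\mathrm{Ad}_{\exp(Y_-)}BCH(X_-,Y_-)$ (conjugate the defining relation by $\exp(Y_-)$ and use uniqueness of the small logarithm), the magnitudes agree with no branch choice at all. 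Alternatively, your group-level argument is valid verbatim once $X,Y$ are restricted to a neighbourhood of $0$ on which $|BCH|<\pi$ --- which is also the regime in which the series definition of $BCH$ renders the statement of the lemma unambiguous; for arbitrary $X,Y$ (as the present paper uses the lemma, with an arbitrary skew-symmetric logarithm) the displayed formula shows how to match the choices of logarithm on the two sides so that the eigenvalues coincide.
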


Next we recall the Youla-decomposition of skew-symmetric matrices. Clearly, any skew-symmetric matrix is normal and thus unitary diagonalizable. Since all the nonzero eigenvalues of a skew-symmetric matrix are located on the imaginary axis it cannot be diagonalized by a real orthogonal matrix. Nevertheless, if the eigenvalues of the matrix $X\in \mathbb{K}_n(\mathbb{R})$ are $\{\pm \mathrm{i}\lambda_1, \pm \mathrm{i}\lambda_2, \ldots \pm \mathrm{i}\lambda_r, 0, 0, \ldots 0\}$, then there is an orthogonal matrix $Q$ such that $X$ can be decomposed as $X=Q\Sigma Q^{-1}$ where
\[
\Sigma=\begin{pmatrix}
0 & \lambda_1 & 0 & 0 & \ldots & 0 & 0 & 0 & \ldots & 0  \\
-\lambda_1 & 0 & 0 & 0 & \ldots & 0 & 0 & 0 & \ldots & 0  \\
0 & 0 & 0 & \lambda_2 & \ldots & 0 & 0 & 0 & \ldots & 0  \\
0 & 0 & -\lambda_2 & 0 & \ldots & 0 & 0 & 0 & \ldots & 0  \\
\vdots & \vdots & \vdots & \vdots & \ddots & \vdots & \vdots & \vdots & \ldots & \vdots \\
0  & 0  & 0 & 0 & \ldots & 0 & \lambda_r & 0 & \ldots & 0  \\
0  & 0  & 0 & 0 & \ldots & -\lambda_r & 0 & 0 & \ldots & 0  \\
0  & 0  & 0 & 0 & \ldots & 0 & 0 & 0 & \ldots & 0  \\
\vdots & \vdots & \vdots & \vdots & \ldots & \vdots & \vdots & \vdots & \ldots & \vdots \\
0  & 0 & 0  & 0  & \ldots & 0 & 0 & 0 & \ldots & 0  \\
\end{pmatrix}
\]
The above decomposition is called the Youla-decomposition of $X$.

We also need the following notions. For any $A,B\in SO(n)$ the operation $(A,B)\mapsto ABA$ is called the Jordan triple product of $A$ and $B$ while the operation $(A,B)\to AB^{-1}A$ is said to be their inverted Jordan triple product. A map $\phi:SO(n)\to SO(n)$ which satisfy $\phi(ABA)=\phi(A)\phi(B)\phi(A)$ for every $A,B\in SO(n)$
is called a Jordan triple endomorphism.
Similarly, a map $\phi$ on $SO(n)$ fulfilling 
$\phi(AB^{-1}A)=\phi(A)\phi(B)^{-1}\phi(A)$ for all $A,B\in SO(n)$
is said to be an inverted Jordan triple endomorphism.

A map is called unital if it sends the unit to the unit. It is not difficult to verify that every unital Jordan triple map is compatible with the inverse and the power operations, i.e., 
$\phi\left(A^k\right) =\phi(A)^k$ $(k\in\mathbb{N})$ and
$ \phi\left(A^{-1}\right) =\phi(A)^{-1}$
holds for every $A$.

\section{Proofs}
The main steps of the proofs follow the ones that appeared in the arguments given in \cite{abe} and \cite{special}, however, the details are different at several points. 
We rely heavily on the following general Mazur-Ulam type result  which appeared in \cite{mazur}.

\begin{prop}\cite[Proposition 20]{mazur} \label{P:1}
Assume that $G$ and $H$ are groups equipped with generalized distance measures $d$ and $\rho$, respectively. Select $a,b\in G$ and set
\[
L_{a,b}:=\{ x\in G : d(a,x)=d(x,ba^{-1}b)=d(a,b) \},
\]
and assume the following:
\begin{itemize}
\item[(c1)]$d(bx^{-1}b,b{x'}^{-1}b)=d(x',x)$ holds for every $x,x'\in G$;
\item[(c2)]$\sup \{d(x,b): x\in L_{a,b} \}<\infty$;
\item[(c3)]there is a constant $K>1$ such that 
\[
d(x,bx^{-1}b)\geq Kd(x,b), \qquad x\in L_{a,b};
\]
\item[(c4)] $\rho(cy^{-1}c',c{y'}^{-1}c')=\rho(y',y)$ holds for every $y,y',c,c'\in H$.
\end{itemize}
Then for any surjective map $\phi\colon G\to H$ satisfying
\[
\rho(\phi(x),\phi(x'))=d(x,x')\qquad(x,x'\in G)
\]
we necessarily have
\[
\phi(ba^{-1}b)=\phi(b)\phi(a)^{-1}\phi(b).
\]
\end{prop}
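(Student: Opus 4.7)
The approach is a Mazur--Ulam style reflection argument adapted to this non-symmetric, group-theoretic setting. I would introduce two ``reflections'' $\tau \colon G \to G$ given by $\tau(x) = b x^{-1} b$ and $\tau' \colon H \to H$ given by $\tau'(y) = \phi(b) y^{-1} \phi(b)$. Hypothesis (c1) says exactly that $\tau$ reverses the arguments of $d$ in the sense $d(\tau(x), \tau(x')) = d(x', x)$, and a direct group computation gives $\tau^{2} = \mathrm{id}$, $\tau(b) = b$, $\tau(a) = b a^{-1} b$; (c4) yields the analogous statements for $\tau'$, in particular $\tau'(\phi(b)) = \phi(b)$. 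In this language the target identity $\phi(b a^{-1} b) = \phi(b) \phi(a)^{-1} \phi(b)$ is precisely the intertwining relation $\phi(\tau(a)) = \tau'(\phi(a))$.

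Next I would rewrite $L_{a,b}$ in a manifestly $\tau$-symmetric form. Applying (c1) with the pair $(x, a)$ converts the second defining condition $d(x, b a^{-1} b) = d(a, b)$ into $d(a, \tau(x)) = d(a, b)$, giving
\[
L_{a,b} = \{x \in G : d(a, x) = d(a, \tau(x)) = d(a, b)\}.
\]
From this expression $b \in L_{a,b}$ and $\tau(L_{a,b}) = L_{a,b}$ are both immediate. Hypothesis (c2) then reads $\lambda := \sup_{x \in L_{a,b}} d(x, b) < \infty$, while (c3) gives the expansion estimate $d(x, \tau(x)) \geq K \, d(x, b)$ on $L_{a,b}$ for some $K > 1$. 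Pushing forward by $\phi$, the image set $L^{\ast} := \phi(L_{a,b})$ sits inside the ``ball'' of radius $\lambda$ around $\phi(b)$, and each $y \in L^{\ast}$ satisfies $\rho(\phi(a), y) = d(a, b) = \rho(y, \phi(\tau(a)))$.

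To derive a contradiction from $\phi(\tau(a)) \neq \tau'(\phi(a))$, the natural device is the composite $\sigma := \phi^{-1} \circ \tau' \circ \phi \circ \tau \colon G \to G$. Using (c1), (c4), and the isometry property of $\phi$, a short chain of equalities shows that $\sigma$ is a bijective isometry of $G$ satisfying $\sigma(b) = b$, and by unwinding the definitions the desired equality is \emph{equivalent} to $\sigma(a) = a$. Under the failure hypothesis, one would then iterate $\sigma$ (or a suitable combination of $\sigma$ with $\tau$) on a carefully chosen element of $L_{a,b}$, invoking (c3) to extract a factor $K > 1$ at each step while using the argument-reversing identities in (c1) and (c4) to convert freely between $d(\cdot, b)$ and $d(b, \cdot)$.

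The main obstacle is exactly this final iteration step: the absence of any triangle inequality rules out the standard Mazur--Ulam/V\"ais\"al\"a estimate of iterated displacements, so one must rely purely on the one-sided ``swap'' identities from (c1) and (c4) to propagate the expansion factor $K$ through the iterates. Once that is accomplished, the geometric growth produced by (c3) is irreconcilable with the uniform boundedness coming from (c2), forcing the initial displacement to be zero and thereby yielding $\sigma(a) = a$, i.e.\ the claimed identity $\phi(ba^{-1}b) = \phi(b)\phi(a)^{-1}\phi(b)$.
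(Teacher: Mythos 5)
Your preparatory material is all correct: the reflections $\tau(x)=bx^{-1}b$ and $\tau'$, the identities $\tau^{2}=\mathrm{id}$, $\tau(b)=b$, the facts $b\in L_{a,b}$ and $\tau(L_{a,b})=L_{a,b}$, and the equivalence of the target identity with $\sigma(a)=a$ for $\sigma=\phi^{-1}\circ\tau'\circ\phi\circ\tau$. But the proof stops exactly where the content of the proposition begins, and the continuation you sketch would not go through. Iterating the fixed map $\sigma$ on a point of $L_{a,b}$ cannot produce the factor $K$: $\sigma$ is $d$-preserving (it is a composition of two reversals), so no iterate of it expands any distance by itself; the expansion must come from (c3), and to invoke (c3) along an orbit you need the orbit to stay inside $L_{a,b}$. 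Whether $\sigma$ maps $L_{a,b}$ into itself, however, is governed precisely by the unknown value $\phi(\tau(a))$: the map $\sigma$ visibly fixes only $b$, and ``$\sigma$ fixes $a$'' is the assertion to be proved. So the plan is circular at the decisive point, and, as you yourself note, the absence of a triangle inequality leaves no substitute estimate --- the ``one-sided swap identities'' alone cannot certify membership of the iterates in $L_{a,b}$.

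The actual proof (Moln\'ar's, \cite[Proposition 20]{mazur}; the present paper quotes the statement without reproving it) resolves both difficulties by running the V\"ais\"al\"a trick on a \emph{family of maps} rather than on an orbit of points, and by choosing a \emph{non-symmetric} reflection on the $H$ side. Put $c=ba^{-1}b$ and let $W$ be the set of all $d$-preserving bijections $g$ of $G$ with $g(a)=a$ and $g(c)=c$. For $g\in W$ one checks $g(b)\in L_{a,b}$ (using $d(b,c)=d(a,b)$ from (c1)), so $\lambda:=\sup_{g\in W}d(g(b),b)<\infty$ by (c2). The star operation $g^{*}=\tau\circ g^{-1}\circ\tau\circ g$ maps $W$ into $W$, and with $u=g(b)$ a short computation via (c1) gives $d(g^{*}(b),b)=d(b,g^{-1}\tau(u))=d(u,\tau(u))\geq K\,d(u,b)=K\,d(g(b),b)$ by (c3); hence $\lambda\geq K\lambda$, forcing $\lambda=0$, i.e.\ every member of $W$ fixes $b$. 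Finally one applies this to $g_{0}=\phi^{-1}\circ\tau_{0}'\circ\phi\circ\tau$, where $\tau_{0}'(y)=\phi(a)\,y^{-1}\,\phi(c)$: this $\tau_{0}'$ is $\rho$-reversing by (c4) --- note that (c4) is deliberately stated with \emph{two} arbitrary elements $c,c'\in H$ exactly to permit such an asymmetric choice --- and it swaps $\phi(a)\leftrightarrow\phi(c)$, whence $g_{0}(a)=a$ and $g_{0}(c)=c$ follow with no circularity, so $g_{0}\in W$. Then $g_{0}(b)=b$ unravels to $\phi(a)\phi(b)^{-1}\phi(c)=\phi(b)$, i.e.\ $\phi(ba^{-1}b)=\phi(b)\phi(a)^{-1}\phi(b)$. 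Your symmetric choice $\tau'(y)=\phi(b)y^{-1}\phi(b)$ discards exactly the freedom in (c4) that makes this last step possible: with it, placing $\sigma$ in $W$ amounts to assuming the conclusion.
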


We are now in a position to prove the following auxiliary lemma. 

\begin{lem} \label{L:1}
If $\phi: SO(n)\to SO(n)$ is a generalized isometry with respect to the generalized distance measure $d_{N,f}(.,.)$ which sends the unit to the unit, then $\phi$ is a continuous unital Jordan triple endomorphism with respect to the operator norm.
\end{lem}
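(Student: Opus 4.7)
\textbf{Proof plan for Lemma~\ref{L:1}.}
My approach is to extract the three required conclusions in sequence, using Proposition~\ref{P:1} as the engine.

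First I would show that $d_{N,f}$ and the operator norm induce the same topology on $SO(n)$. If $A_n\to A$ in operator norm, then $A^{-1}A_n\to I$, so every eigenvalue of $A^{-1}A_n$ tends to $1$; continuity of $f$ at $1$ (from (f2)) together with $f(1)=0$ (from (f1)) makes $f(A^{-1}A_n)\to 0$ in the functional calculus, hence $d_{N,f}(A_n,A)\to 0$. Conversely, $N(f(A^{-1}A_n))\to 0$ forces $f(A^{-1}A_n)\to 0$ since all norms on the finite-dimensional matrix algebra are equivalent, and the conditions (f1), (f3), (f4), (f5) together rule out eigenvalue accumulation of $A^{-1}A_n$ at any point of $\mathbb{T}$ other than $1$. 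Hence $A_n\to A$ in operator norm. Continuity of $\phi$ in operator norm then follows at once from the isometry relation. The isometry also makes $\phi$ injective (if $\phi(A)=\phi(B)$ then $d_{N,f}(A,B)=0$, so $A=B$); a continuous injective self-map of the compact connected smooth manifold $SO(n)$ is automatically surjective by invariance of domain, its image being both open and closed.

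Next I would verify the four hypotheses of Proposition~\ref{P:1} with $G=H=SO(n)$ and $d=\rho=d_{N,f}$. Conditions (c1) and (c4) follow from the unitary invariance of $N$ combined with the conjugation identity $f(UXU^{-1})=Uf(X)U^{-1}$ of Borel functional calculus: a direct computation reduces both sides of (c1) to $N(f(x(x')^{-1}))$. Condition (c2) is immediate since $f$ is bounded on $\mathbb{T}$ and $SO(n)$ is compact, so $d_{N,f}$ is a bounded function. For (c3) I would exploit (f2): writing $f(z)=c(z-1)+o(|z-1|)$ with $c\neq 0$ near $z=1$, one gets $f(z^2)=2f(z)+o(|z-1|)$, and functional calculus promotes this to $N(f(M^2))\geq K\,N(f(M))$ for every $M\in SO(n)$ sufficiently close to $I$, with some $K\in (1,2)$. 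When $d_{N,f}(a,b)$ is small, any $x\in L_{a,b}$ satisfies that $N(f(a^{-1}x))$ and $N(f(x^{-1}ba^{-1}b))$ are both small, so by the topological equivalence proved above, $a^{-1}x$, $x^{-1}ba^{-1}b$, and hence $x^{-1}b$ all lie in an operator-norm neighborhood of $I$. Thus (c3) holds in this local regime, and Proposition~\ref{P:1} yields $\phi(ba^{-1}b)=\phi(b)\phi(a)^{-1}\phi(b)$ for every pair $(a,b)$ in a neighborhood of the diagonal in $SO(n)\times SO(n)$.

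It remains to bootstrap this local inverted Jordan triple identity to the required global Jordan triple endomorphism law. Specializing $a=I$ gives $\phi(b^2)=\phi(b)^2$ for $b$ near $I$, and specializing $b=I$ gives $\phi(a^{-1})=\phi(a)^{-1}$ for $a$ near $I$. Iterating the local identity inside a small neighborhood yields $\phi(a^k)=\phi(a)^k$ there for every $k\in\mathbb{Z}$; and since every $g\in SO(n)$ may be written as $\exp(X/n)^{n}$ with $\exp(X/n)$ arbitrarily close to $I$, these power and inversion compatibilities extend to all of $SO(n)$. For arbitrary $A,B\in SO(n)$ I would join $(A,B)$ to $(I,I)$ by a continuous path in $SO(n)\times SO(n)$, subdivide it finely so that consecutive pairs fall within the local neighborhood of the previous step, and iteratively transport the local identity along the path, using the continuity of $\phi$ to pass to the limit. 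Replacing $A$ by $A^{-1}$ and invoking the global inversion rule then produces $\phi(BAB)=\phi(B)\phi(A)\phi(B)$, the Jordan triple endomorphism property. The principal obstacle will be making the verification of (c3) uniform enough to cover a genuine neighborhood of the diagonal; beyond that, the globalization step is delicate because $d_{N,f}$ need not satisfy a triangle inequality, so one must work through continuity of $\phi$ rather than direct metric chaining.
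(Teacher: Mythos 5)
Your architecture largely coincides with the paper's: operator-norm continuity from the topological equivalence forced by (f1), (f3)--(f5); verification of (c1)--(c4) and an appeal to Proposition~\ref{P:1} to obtain the inverted Jordan triple identity for nearby pairs; then globalization and unitality to pass from the inverted triple law to the Jordan triple law. Your treatment of (c1)--(c4) matches the paper's in substance, including the promotion of the scalar inequality $|f(z^2)|\geq K|f(z)|$ near $1$ (which the paper quotes from \cite{special}) to the norm inequality via functional calculus. One genuinely different ingredient that is perfectly valid: you obtain surjectivity from injectivity plus invariance of domain on the compact connected manifold $SO(n)$ (image open and closed, hence everything), whereas the paper adapts the folk argument that an isometry of a compact metric space is onto, iterating $\phi$ on a point outside the image and extracting a convergent subsequence. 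Your route is shorter but invokes Brouwer-type topology; the paper's is elementary and does not use the manifold structure at all.

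The genuine gap is in your globalization step. ``Join $(A,B)$ to $(I,I)$ by a path, subdivide finely, transport the local identity, and pass to the limit by continuity'' is not a working mechanism: the set of pairs $(a,b)$ for which $\phi(ba^{-1}b)=\phi(b)\phi(a)^{-1}\phi(b)$ holds is closed by continuity but in no evident way open, so no continuation or connectedness argument applies; moreover the identity at a pair $(a,b)$ constrains $\phi$ at the \emph{distant} point $ba^{-1}b$, so its validity at consecutive pairs along a path does not compose to validity at the endpoints, and there is no limiting object for continuity to act on. The paper sidesteps exactly this by citing \cite[Lemma 7]{hatori}, with hypotheses checked as in the second part of the proof of \cite[Lemma 3]{abe}; the mechanism behind that lemma is algebraic, not homotopic. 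Concretely: given arbitrary $a,b$, choose $v$ close to $I$ with $v^m=a^{-1}b$ (possible since $\exp$ is surjective, $v=\exp(Z/m)$), set $x_j=av^j$, and observe $x_{j+1}=x_jx_{j-1}^{-1}x_j$ with $\|x_{j+1}-x_j\|=\|v-I\|$ uniformly small, so the local identity yields the recursion $\phi(x_{j+1})=\phi(x_j)\phi(x_{j-1})^{-1}\phi(x_j)$; solving it gives $\phi(x_j)=\phi(x_1)\bigl(\phi(x_0)^{-1}\phi(x_1)\bigr)^{j-1}$, whence $\phi(x_{2m})=\phi(x_m)\phi(x_0)^{-1}\phi(x_m)$, which is exactly $\phi(ba^{-1}b)=\phi(b)\phi(a)^{-1}\phi(b)$ because $av^{2m}=ba^{-1}b$. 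Your local power and inversion compatibilities are the special case $a=I$ of this chain and are fine as far as they go, but they do not substitute for it; to close the proof you must either carry out this root-chaining explicitly or cite \cite[Lemma 7]{hatori} as the paper does.
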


\begin{proof}
We first show that $\phi$ is continuous with respect to the operator norm topology. Consider a fixed $A\in SO(n)$ and a sequence $(A_i)_{i\in \mathbb{N}} \in SO(n)$ such that $A_i \to A$ in the operator norm. Clearly, this implies that $A_i^{-1}A \to I$. By the continuity of $f$ on $\mathbb{T}\setminus\{-1\}$ and the property (f1), we infer that $f(A_i^{-1} A)\to 0$ in the operator norm. Since on a finite dimensional normed space every norm is complete and generates the same topology we have 
\[
N(f(A_i^{-1} A))=d_{N,f}(A_i,A)\to 0.
\]
Since $\phi$ preserves the generalized distance measure $d_{N,f}$ we also have
\[
N\left(f(\phi(A_i)^{-1}\phi(A))\right)=d_{N,f}(\phi(A_i),\phi(A))\to 0.
\]
It follows that $f(\phi(A_i)^{-1}\phi(A))\to 0$ in the operator norm. By the continuity of $f$ on $\mathbb{T}\setminus\{-1\}$ and the properties (f1), (f5) we necessarily have $\phi(A_i)^{-1}\phi(A)\to I$ implying that $\phi(A_i)\to\phi(A)$ in the operator norm.

Our aim is now to show that $\phi$ is surjective. There is a folk result (see e.g. \cite[Excercise 2.4.1]{folk}) saying that isometries from a compact metric space into itself are automatically surjective. Here we use the ideas from its proof and adjust them to the setting of generalized distance measures. So, assume for contradiction that $\phi$ is not surjective. Then there exists an $A\in SO(n)$ which is not contained in the image of $\phi$. This yields
\[
\inf \{ \|\phi(B)-A\| : B\in SO(n) \} > 0
\]
and, similarly to the above discussed argument relating to the continuity, we can infer that
\[
c:= \inf \{ d_{N,f}(\phi(B),A) : B\in SO(n) \} > 0.
\]
Set $A_0:=A$ and define the sequence $(A_i)_{i\in\mathbb{N}}\in SO(n)$ by the induction $A_{i}:=\phi(A_{i-1})$. Since $\phi$ is a generalized isometry we obtain that
\[
d_{N,f}(A_i,A_j)=d_{N,f}(\phi^i(A_0),\phi^j(A_0))=d_{N,f}(\phi^{i-j}(A),A)\geq c> 0
\]
holds for every $i,j\in\mathbb{N}$ with $i>j$. This implies that $\|A_i - A_j \|$ is bounded away from zero. We notice that $\phi(SO(n))$, as an image of the compact set $SO(n)$ under the continuous map $\phi$, is compact. Hence one can choose a convergent subsequence of $(A_i)_{i\in\mathbb{N}}$, and thus the quantity $\|A_i - A_j \|$ cannot be bounded away from zero. Therefore, $\phi$ is a surjective generalized isometry.

We now intend to prove that $\phi$ preserves the inverted Jordan triple product locally. In order to do so, it is sufficient to show that the conditions (c1)-(c4) appearing in Proposition~\ref{P:1} are satisfied in the following setting: $G=H=SO(n)$ and $d=\rho=d_{N,f}$.
As for the condition (c1), we calculate
\[
\begin{gathered}
f(B^{-1}X{X'}^{-1}B)=B^{-1}f(X{X'}^{-1})B=\\
B^{-1}f(({X'}{X'}^{-1})X{X'}^{-1})B=B^{-1}{X'}f({X'}^{-1}X){X'}^{-1}B.
\end{gathered}
\]
Since $N$ is unitary (orthogonal) invariant we infer from this that
\[
\begin{gathered}
d_{N,f}(BX^{-1}B,B{X'}^{-1}B)=N\left(f(B^{-1}XB^{-1}B{X'}^{-1}B) \right)=N\left(f(X{X'}^{-1})\right)\\
= N\left(f(X'({X'}^{-1}X){X'}^{-1})\right)=
N\left(f({X'}^{-1}X)\right)=d_{N,f}(X',X).
\end{gathered}
\]
The fact that (c2) is also valid is an immediate consequence of the boundedness of $f$ and the equivalence of the norm $N(.)$ to the operator norm.

We next show that (c3) is satisfied when $A$ and $B$ are close enough to each other in the operator norm. 
By the definition of conformal maps we have that the limit
\begin{equation*}
\lim_{z\to 1}\frac{f(z)-f(1)}{z-1}
\end{equation*}
exists and is different from zero.
It is written on p.2 in \cite{special} that this property  guarantees that the condition $\left|f(z^2)\right|\geq K \left|f(z)\right|$ is satisfied automatically with some positive constant $K>1$ for every $z\in\mathbb{T}$ from a neighbourhood of $1$. Clearly, this implies that 
\begin{equation} \label{close}
N(f(C^2))\geq K \cdot N(C)
\end{equation} holds whenever $C\in SO(n)$ is close enough to $I$ in the operator norm. Select $A,B\in SO(n)$, which are close enough to each other in the operator norm, and pick an arbitrary $X\in L_{A,B}$. Since the norm $N(.)$ is equivalent with the operator norm, by the definition of $L_{A,B}$ and the property (f1) of $f$ we obtain that the quantity
\[
N\left(f(X^{-1}B)\right)=d_{N,f}(X,B)=d_{N,f}(A,B)=N\left(f(A^{-1}B)\right)
\]
is small. By the property (f1) we also have that $X^{-1}B$ is close enough to the identity. According to \eqref{close} we have
\[
\begin{gathered}
d_{N,f}(X,BX^{-1}B)=N\left(f\left(X^{-1}(BX^{-1}B)\right)\right)=N\left(f\left((X^{-1}B)^2\right)\right)\geq\\
K \cdot N\left(f\left(X^{-1}B\right)\right)=K\cdot d_{N,f}(X,B).
\end{gathered}
\]
This verifies the property (c3) for close enough $A,B\in SO(n)$.

Relating to the condition (c4), similarly to the argument we have presented concerning the condition (c1), we compute
\[
\begin{gathered}
d_{N,f}(CY^{-1}C',C{Y'}^{-1}C')=N\left(f({C'}^{-1}YC^{-1}C{Y'}^{-1}C')\right)=\\
N\left(f(Y{Y'}^{-1})\right)=N\left(f({Y'}^{-1}Y)\right)=d_{N,f}(Y',Y).
\end{gathered}
\]

Taking all the information what we have into account we conclude that $\phi$ preserves the inverted Jordan triple product for close enough elements.  
In order to complete the proof, \cite[Lemma 7]{hatori} can be applied for obtaining the global inverted Jordan triple product preserver property if it holds for close enough elements. To check that the assumptions of \cite[Lemma 7]{hatori} are satisfied one can argue by literally following the second part of the proof of \cite[Lemma 3.]{abe}. Since $\phi$ is unital, and thus compatible with the inverse operation, we conclude that $\phi$ preserves (globally) the Jordan triple product, as well.
The reader should consult with \cite{abe} and \cite{hatori} for a more detailed argument.
\end{proof}

Now we are in a position to prove our first result.

\begin{proof}[Proof of Theorem~\ref{T:1}]
Let us begin with the sufficiency part. It is not difficult to verify that if $\phi$ is of the form (a) or (b), then it preserves the quantity $d_{N,f}(.,.)$ between the elements of $SO(n)$. So, we are concerned about the case when $\phi$ is of the form (c) or (d). Let us assume that $\phi$ is of the form (c); the case when $\phi$ is of the form (d) can be handled similarly. By Lemma~\ref{L:0} we conclude that the skew-symmetric matrices $BCH(-X,Y)$ and $BCH(-\widetilde{X},\widetilde{Y})$ has the same Youla decomposition and thus there exists an orthogonal matrix $Q$ such that $BCH(-X,Y)=Q\cdot BCH(-\widetilde{X},\widetilde{Y})\cdot Q^{-1}$. This implies that
\[
f\left(\exp(BCH(-X,Y))\right)=Q\cdot f\left(\exp(BCH(-\widetilde{X},\widetilde{Y}))\right)\cdot Q^{-1}.
\]
Since $N$ is unitary invariant we have 
\[
\begin{gathered}
d_{N,f}\left(\phi(\exp(X)),\phi(\exp(Y))\right)=N\left(f(\exp(BCH(-\widetilde{X},\widetilde{Y})))\right)= \\
N\left(f\left(\exp(BCH(-X,Y))\right)\right)
=d_{N,f}\left(\exp(X),\exp(Y)\right).
\end{gathered}
\]
It is now apparent that any map of the form (c) and (d) is a generalized isometry.

As for the necessity, assume that $\phi$ is a generalized isometry with respect to the generalized distance measure $d_{N,f}(.,.)$. 
Then we observe that $\phi(I)^{-1}\phi(.)$ is a map which has the same preserver property as $\phi(.)$ and sends the unit to the unit.
It means that in the sequel without loss of generality we may and do assume that $\phi(I)=I$.
According to Lemma~\ref{L:1} we have that $\phi$ is a unital Jordan triple endomorphism, as well.

We intend to show now that $\phi$ maps a one-parameter subgroup to another one. In order to do so, for a fixed $X\in \mathbb{K}_n(\mathbb{R})$ consider the one-parameter subgroup generated by $X$, that is, $e^{t X}$ where $t\in\mathbb{R}$. We claim $\gamma(t):=\phi\left(e^{t X}\right)$ is a one-parameter subgroup of $SO(n)$, as well.
To verify this property it is sufficient to show that \eqref{subgroup} holds in the case where $t,s\in \mathbb{Q}$. Indeed, then \eqref{subgroup} follows from the facts that the rationals are dense in $\mathbb{R}$ and $\phi$ is continuous. So, consider the numbers $t=p/q$ and $s=r/m$ where $p,q,r$ and $m$ are integers. Then we compute
\[
\begin{gathered}
\gamma(t+s)=\phi\left(e^{(t+s)X}\right)=\phi\left(e^{\left(\frac{pm+rq}{qm}\right)X}\right)=\\
\phi\left(e^{\frac{X}{qm}}\right)^{pm+rq}=\phi\left(e^{\frac{X}{qm}}\right)^{pm}\cdot\phi\left(e^{\frac{X}{qm}}\right)^{rq}=\\
\phi\left(e^{\frac{p}{q}X}\right)\cdot \phi\left(e^{\frac{r}{m}X}\right)=\gamma(t)\cdot\gamma(s)
\end{gathered}
\]
and this verifies our claim. 

Since the exponential map maps from ${\mathbb K}_n({\mathbb R})$ onto $SO(n)$ we obtain that there exists an $Y\in \mathbb{K}_n(\mathbb{R})$, the generator of $\gamma(t)$, such that $\gamma(t)=e^{t Y}$. We constitute a map $h\colon \mathbb{K}_n(\mathbb{R})\to \mathbb{K}_n(\mathbb{R}), X\mapsto Y$ such that $\phi\left(e^{t X}\right)=e^{t h(X)}$ holds for every $t\in\mathbb{R}$. Since $\phi$ respects the generalized distance measure $d_{N,f}$ it is clearly injective. It implies that $h$ is injective, as well. We assert that $h$ is surjective. Indeed, considering $\phi^{-1}$ into the place of $\phi$ by the above argument we conclude that there is an injective map $g:\mathbb{K}_n(\mathbb{R})\to \mathbb{K}_n(\mathbb{R})$ for which $\phi^{-1}\left(e^{t X}\right)=e^{t g(X)}$ holds for every $t\in\mathbb{R}$ and $X\in\mathbb{K}_n(\mathbb{R})$. This gives us that $h(g(X))=X$ holds for every $X\in\mathbb{K}_n(\mathbb{R})$. Hence $h$ is surjective and thus a bijection on $\mathbb{K}_n(\mathbb{R})$.

Next we prove that $h:\mathbb{K}_n(\mathbb{R})\to \mathbb{K}_n(\mathbb{R})$ is a linear isometry with respect to the unitary invariant norm $N(.)$.
Since $f$ is conformal in a neighbourhood of $1$ with the property that $f(1)=0$ we obtain that $f$ has locally a power series expansion of the form 
\[
f(z)=\sum_{k=1}^{\infty} a_k (z-1)^{k}
\]
with $a_1\neq 0$.
Let us consider any skew-symmetric matrices $X,Y\in \mathbb{K}_n(\mathbb{R})$. It is apparent that
\[
e^{-tX}e^{tY}=I+(Y-X)t+\mathcal{O}(t^2)
\]
and thus 
\[
d_{N,f}\left(e^{tX},e^{tY}\right)=N\left(a_1(Y-X)t+\mathcal{O}(t^2)\right).
\]
Similarly, we have
\[
\begin{gathered}
d_{N,f}\left(\phi\left(e^{tX}\right),\phi\left(e^{tY}\right)\right)=d_{N,f}\left(e^{th(X)},e^{th(Y)}\right)\\=N\left(a_1(h(Y)-h(X))t+\mathcal{O}(t^2)\right).
\end{gathered}
\]
Since $\phi$ is a generalized isometry on $SO(n)$ we conclude that
\begin{equation} \label{limit}
N\left(a_1(Y-X)+\mathcal{O}(t)\right)=
N\left(a_1(h(Y)-h(X))+\mathcal{O}(t)\right).
\end{equation} 
Taking the limit $t\downarrow 0$ in \eqref{limit} yields that $h:\mathbb{K}_n(\mathbb{R})\to \mathbb{K}_n(\mathbb{R})$ is a surjective isometry with respect to the metric induced by the unitary invariant norm $N(.)$. 
By the definition of $h$ we also have $h(0)=0$. Consequently, an application of the celebrated Mazur-Ulam theorem yields that $h$ is linear.

The structure of linear isometries on $K_n(\mathbb{R})$ is described in \cite{li, chi} (see also \cite{guralnick} for the details of the proof) with respect to any orthogonal congruence invariant norm which is not a constant multiple of the Frobenius norm. By that result we have that there exist a real number $\eta\in \{-1,1\}$ and an orthogonal matrix $Q\in O(n)$ such that
\begin{itemize}
\item[(aa)] $h(X)=\eta QXQ^{-1}$ for every $X\in \mathbb{K}_n(\mathbb{R})$;
\item[(bb)] $n=4$ and $h(X)=\eta Q\widetilde{X}Q^{-1}$ for every $X\in \mathbb{K}_n(\mathbb{R})$. 
\end{itemize}
From this we infer that
\[
\phi(\exp X)= \exp h(X)= \exp \left(\eta QXQ^{-1}\right) =Q\left( \exp \eta X \right) Q^{-1}
\]
and this results (a) and (b) when $h$ is of the form (aa) and $A=\exp X$. The case (bb) can be handled by a similar way. The proof is complete.
\end{proof}

We continue with the proof of our second theorem.

\begin{proof}[Proof of Theorem~\ref{T:2}]
Parallel with the proof of Theorem~\ref{T:1}, we obtain that
\[
\phi(A)=\exp(h(X)),\qquad X\in\mathbb{K}_3(\mathbb{R})
\]
where $A=\exp X$ and $h:\mathbb{K}_3(\mathbb{R})\to\mathbb{K}_3(\mathbb{R})$ is a linear isometry with respect to the Froebenius norm. 
Apparently, if the eigenvalues of the skew-symmetric matrix $X\in \mathbb{K}_3(\mathbb{R})$ are $\{\mathrm{i}\lambda,-\mathrm{i}\lambda,0 \}$ with some $\lambda \geq 0$, then 
its singular values are $\sigma_1=\sigma_2=\lambda$ and $\sigma_3=0$. Hence we have
\[
\|X\|_F=\sqrt{2}\cdot\|X\|=\sqrt{2}\cdot\sigma_{max}=\lambda.
\]
It means that the Frobenius norm is a $c$-spectral norm on $\mathbb{K}_3(\mathbb{R})$ with $c=(\sqrt{2},0,0)$. Therefore, there exist a real number $\eta\in \{-1,1\}$ and an orthogonal matrix $Q\in O(3)$ such that $h$ is of the form (aa) (see \cite[Theorem 4.2]{chi}). Now, just as at the end of the proof of Theorem~\ref{T:1}, we conclude that $\phi$ is of the desired form.
\end{proof}

\subsection*{Acknowledgements}

The author thanks the anonymous referee for pointing out some flow in the presentation of the paper and his/her valuable comments and suggestions.

This work was partially supported by the ``Lend\" ulet'' Program (LP2012-46/2012) of the Hungarian Academy of Sciences and the National Research, Development and Innovation Office -- NKFIH Reg. No. K115383.  

The author also thanks Prof. Lajos Moln\'ar for encouragement.

\end{document}